\documentclass{amsart}

\usepackage{amssymb, latexsym,pdfsync,amsmath,amsthm, ulem,hyperref,graphicx}
\usepackage{fullpage}
\usepackage{pgf,tikz}
\usepackage{mathrsfs}
\usetikzlibrary{arrows}

\usepackage{longtable}

\newtheorem{theorem}{Theorem}

\theoremstyle{definition}
\newtheorem{definition}{Definition}

\newcommand{\bS}{\mathbb{S}}

\newcommand{\bF}{\mathbb{F}}

\newcommand{\Fq}{\mathbb{F}_q}

\newcommand{\cV}{\mathcal V}


\def\PG{\mathrm{PG}}
\def\PGL{\mathrm{PGL}}
\def\GL{\mathrm{GL}}

\def\semsymmat#1#2#3#4#5#6#7#8#9{%
  \def\ArgI{{#1}}%
  \def\ArgII{{#2}}%
  \def\ArgIII{{#3}}
  \def\ArgIV{{#4}}
  \def\ArgV{{#5}}
  \def\ArgVI{{#6}}
  \def\ArgVII{{#7}}
  \def\ArgVIII{{#8}}
  \def\ArgIX{{#9}}%
  \BlahRelay
}
\def\BlahRelay#1{%
  \left [ \begin{matrix} \ArgI&\ArgII&\ArgIII&\ArgIV \\  \ArgII&\ArgV&\ArgVI&\ArgVII \\  \ArgIII&\ArgVI&\ArgVIII&\ArgIX \\  \ArgIV&\ArgVII&\ArgIX&#1 \end{matrix} \right ]%
}

\def\BlahRelay#1{%
  \left [ \begin{matrix} \ArgI&\ArgII&\ArgIII&\ArgIV \\  &\ArgV&\ArgVI&\ArgVII \\  &&\ArgVIII&\ArgIX \\  &&&#1 \end{matrix} \right ]%
}

\setlength{\parindent}{0pt}
\setlength{\parskip}{1ex plus 0.5ex minus 0.2ex}

\begin{document}
\title{Symplectic 4-dimensional semifields of order $8^4$ and $9^4$}
\author{Michel Lavrauw and John Sheekey}\thanks{The first author acknowledges the support of {\it The Scientific and Technological Research Council of Turkey}, T\"UB\.{I}TAK (project no. 118F159).}

\maketitle

\begin{abstract}
We classify symplectic 4-dimensional semifields over $\bF_q$, for $q\leq 9$, thereby extending (and confirming) the previously obtained classifications for $q\leq 7$. The classification is obtained by classifying all symplectic semifield subspaces in $\PG(9,q)$ for $q\leq 9$ up to $K$-equivalence, where $K\leq \PGL(10,q)$ is the lift of $\PGL(4,q)$ under the Veronese embedding of $\PG(3,q)$ in $\PG(9,q)$ of degree two.  Our results imply the non-existence of non-associative symplectic 4-dimensional semifields for $q$ even, $q\leq 8$. For $q$ odd, and $q\leq 9$, our results imply that the isotopism class of a symplectic non-associative 4-dimensional semifield over $\bF_q$ is contained in the Knuth orbit of a Dickson commutative semifield.
\end{abstract}
\section{Introduction}

A finite semifield is a division algebra in which multiplication is not assumed to be associative. Finite semifields were introduced by Leonard Eugene Dickson at beginning of the 20th century and are well studied objects in algebra, finite geometry and combinatorics, with applications in coding theory and cryptography, through their connections with projective planes, MRD codes, PN functions and S-boxes. Commutative semifields are of special interest because the Knuth orbits of their isotopism classes contain the isotopism classes of symplectic semifields which in turn give rise to symplectic spreads, Dembowski-Ostrom polynomials, and Hadamard difference sets. There are many constructions of semifields known, most of which are non-commutative. We refer to \cite[Section 6]{LaPo2011} for an overview of these constructions and the above mentioned links.

Computational classifications of semifields ($n$-dimensional over the finite field of order $q$) have been obtained for $q=2, n\leq 6$; $q=3, n\leq 5$; and $q\leq 5, n\leq 4$. These classifications have a (relatively) long history; the first of these results was obtained in the early days of computational results in mathematics. E. Kleinfeld \cite{Kleinfeld1960} classified semifields (at that time called Veblen-Wedderburn systems) of order 16 with the help of a computer in 1960. D. E. Knuth used a computer to construct a new semifield of order 32, see \cite{Knuth1965}. A computational classification of all semifields of order 32 was obtained in 1962 (using a CDC 1604) by Walker \cite{Walker1962}, and confirmed by Knuth \cite{Knuth1965} soon after. More recently, computational classifications of semifields were obtained by Dempwolff \cite{Demp2008} ($q^n=3^4$), and by Combarro, Ranilla, and R\'ua, see \cite{RuCoRa2009} ($q^n=2^6$), \cite{RuCoRa2011} ($q^n=4^4, 5^4$), \cite{RuCoRa2012} ($q^n=3^5$), and \cite{CoRuRa2012} ($q^n=7^4$). 

Complete computational classifications are also known for certain special type of semifields, see for example \cite{LaRo2018}, in which all 8-dimensional rank 2 commutative semifields are classified, relying on theoretical results from \cite{BlBaLa2003} and \cite{Lavrauw2006}.
A partial classification of commutative semifields of order $3^5$ and $5^5$ can be found in \cite{CoKo2010}, where the authors restrict the classification to commutative semifields whose corresponding Dembowski-Ostrom polynomials have their coefficients in the base field. 

The contribution of this paper is the classification of symplectic semifields of order $8^4$ and $9^4$ with centre containing the fields of order $8$ and $9$ respectively. This advances the computational classification of symplectic semifields and of commutative semifields with the following theorems as our main results.

{\bf Theorem 1.}
{\it There exist no non-associative 4-dimensional symplectic semifields over $\bF_q$, $q$ even, $q\leq 8$.}

{\bf Theorem 2.}
{\it A non-associative 4-dimensional symplectic semifield over $\bF_q$, $q$ odd, $q\leq 9$ is Knuth-equivalent to a Dickson commutative semifield.}

As a corollary we have the following classifications in terms of commutative semifields.

{\bf Corollary 1.}
{\it There exist no non-associative 4-dimensional commutative semifields over $\bF_q$, $q$ even, $q\leq 8$.}

{\bf Corollary 2.}
{\it A non-associative 4-dimensional commutative semifield over $\bF_q$, $q$ odd, $q\leq 9$ is isotopic to a Dickson commutative semifield.}

The paper is organised as follows. In Section  \ref{sec:defs} we introduce the necessary notation and terminology. Section \ref{sec:sss} contains the description of the algorithms and the proof of the main results. In Section \ref{sec:sslines} and Section \ref{sec:ssplanes} we list the representatives of the $K$-orbits of the symplectic semifield lines and of the $K$-orbits of two-dimensional symplectic semifield subspaces, under the action of $K\cong \PGL(4,q)$ in its action on the subspaces of $\PG(9,q)$, for $q=8$ and $q=9$.

\section{Symplectic semifields and the associated algebraic varieties}
\label{sec:defs}

In this section we set up some of the notation and terminology used in the paper. For reasons of brevity, we do not include all of the basic definitions in the theory of semifields; for those we refer the reader to \cite{LaPo2011}.

Throughout this paper, let $q$ be a power of a prime, and denote the finite field of order $q$ by $\bF_q$. We consider semifields of dimension $n$ over $\Fq$; that is, division algebras in which multiplication is not assumed to be associative, whose centre contains $\Fq$, and which are $n$-dimensional  over $\Fq$. Such semifields are in one-to-one correspondence with {\it semifield spread sets}: $n$-dimensional subspaces of $(n\times n)$-matrices over $\Fq$ in which every nonzero element is invertible. We note that we are not assuming a multiplicative identity in the definition of semifield; usually a division algebra without identity is called a {\it presemifield}, but we omit this distinction here for convenience and brevity. That we may do so without loss of generality follows from a well-known result which says that every presemifield is isotopic to a semifield, and the fact that we are interested in the classification of semifields up to isotopism.

It is also well-known that each isotopism class $[\bS]$ of a semifield $\bS$ gives rise to a total of six isotopism classes of semifields, called the {\it Knuth orbit of $\bS$}. Semifields whose isotopism class belongs to the same Knuth-orbit are called {\it Knuth-equivalent}.
A semifield is said to be {\it commutative} if multiplication is commutative. A semifield is said to be {\it symplectic} if the spread it defines in $\PG(2n-1,q)$ (the $(2n-1)$-dimensional projective space over $\bF_q$) consists of totally singular subspaces with respect to some symplectic polarity; i.e. it defines a spread of the a symplectic polar space $W(2n-1,q)$. It is well known that a semifield is Knuth-equivalent to a commutative semifield if and only if it is Knuth-equivalent to a symplectic semifield. 

In order to make the computational classification feasible we used a geometric approach. Isotopism classes of semifields can be seen as orbits (under a well-defined group action) of subspaces in the space of a Segre variety in a projective space \cite{Lavrauw2008}. Applying this connection to 4-dimensional semifields, we end up in a 15-dimensional projective space, which is unfortunately still too large for the parameters of the semifields that we are aiming to classify.  However, since we are interested in commutative (or symplectic) semifields, we can reduce our search space to the space of a Veronese variety \cite{Lavrauw2008,LuMaPoTr2011}, which in this case is a 9-dimensional projective space over $\bF_q$. Below, we give some details of this connection between the algebraic approach and the geometric approach to the isotopism problem for symplectic semifields.

By an appropriate choice a of non-degenerate symplectic bilinear form in $2n$ variables, it can be shown that symplectic semifields are in one-to-one correspondence with $n$-dimensional subspaces of {\it symmetric} $(n\times n)$-matrices over $\Fq$ in which every nonzero element is invertible \cite{Kantor2003}. Since the condition for a matrix to be invertible is invariant under multiplication by a non-zero scalar, it is natural to work in the projective setting, so we denote $N={n+1\choose 2}$ and consider $(n-1)$-dimensional subspaces of $\PG(N-1,q)$. In this projective setting, the points defined by rank one symmetric matrices over $\bF_q$ correspond to the $\bF_q$-rational points of a {\it Veronese variety}, the image of the {\it Veronese map} from $\PG(n-1,q)$ into $\PG(N-1,q)$. We will denote points of a projective space by a letter followed by a tuple of field elements; for example, $p(1,0,0,0)$ denotes a point of $\PG(3,q)$, whose underlying coordinate vector is $(1,0,0,0)$. We identify a point defined by an $(n\times n)$ symmetric matrix $A=(a_{ij})$ with the point $p$ of $\PG(N-1,q)$ given by $p(a_{11},a_{12},\ldots,a_{1n},a_{22},\ldots,a_{2n},\ldots,a_{nn})$; that is, as coordinates we take the upper triangular part of the matrix and then read the entries row by row. We also define the {\it rank of the point $p$} as the the rank of the corresponding matrix $A$.

Given this setup, the Veronese variety $\cV_{n-1,2}(q)$ is the image of the Veronese map $\nu_{n-1,2}$ defined by
\begin{align*}
\nu_{n-1,2}&: \PG(n-1,q)\mapsto \PG(N-1,q)\\
&: s(a_1,a_2,\ldots,a_n)\mapsto p(a_1^2,a_1a_2,\ldots,a_1a_n,a_2^2,\ldots,a_2a_n,\ldots,a_n^2),
\end{align*}
where $N={{n+2}\choose{2}}-1$.
That is, the point defined by the row vector $a$ is mapped to the point defined by the entries of the matrix $a^T a$. It is easy to see that $\bF_q$-rational points on $\cV_{n-1,2}(q)$ correspond precisely to points defined by symmetric matrices of rank one under the above mentioned correspondence.

Points of rank two in $\PG(N,q)$ are points which belong to the secant variety $\cV_{n,2}(q)^{(2)}$ of the Veronese variety $\cV_{n,2}(q)$ but do not belong to $\cV_{n,2}(q)$. Similarly, one has the $i$-th secant variety $\cV_{n,2}(q)^{(i)}$ of $\cV_{n,2}(q)$. It follows that symplectic semifield spread sets correspond to $(n-1)$-dimensional subspaces of $\PG(N-1,q)$ disjoint from the $(n-2)$-nd secant variety $\cV_{n-1,2}(q)^{(n-1)}$ of the Veronese variety $\cV_{n-1,2}(q)$.  Since we will search for symplectic semifield spread sets by classifying all subspaces (of any dimension) disjoint from $\cV_{n-1,2}(q)^{(n-1)}$, we introduce the following terminology for convenience.

\begin{definition}
A {\it symplectic semifield subspace} of $\PG(N-1,q)$ is a subspace which is disjoint from the $(n-2)$-nd secant variety $\cV_{n-1,2}(q)^{(n-1)}$ of the Veronese variety $\cV_{n-1,d}(q)$. 
If a symplectic semifield subspace is not properly contained in another symplectic semifield subspace, we say that it is {\it maximal}. An $(n-1)$-dimensional symplectic semifield subspace $\PG(N-1,q)$ is called a {\it maximum symplectic semifield subspace}.
\end{definition}

The natural definition of equivalence in this setting is equivalence under the group $K\cong \PGL(n,q)$ as a subgroup of $\PGL(N,q)$ stabilising the Veronese variety. The action of $K$ on $\PG(N-1,q)$ is induced by the action of $\GL(n,q)$ on symmetric matrices given by $A\mapsto XAX^T$, and can also be seen as the lift of the action of $\PGL(n,q)$ on $\PG(n-1,q)$ through the Veronese map. In terms of semifields, this corresponds to the notion of {\it strong isotopy}. We note that the more general definition of equivalence of semifields is using {\it isotopy}; this corresponds to equivalence under the action of a subgroup of $\PGL(n^2,q)$ stablising a {\it Segre variety} in $\PG(n^2-1,q)$ \cite{Lavrauw2008}. It can occur that symplectic semifields are isotopic but not strongly isotopic; see for example \cite{Zhou2020}, but in the cases considered in this paper, these two notions coincide.

\section{Description of the algoritms and proofs of the main results}
\label{sec:sss}

The computational classification of symplectic 4-dimensional semifields order $q^4$ for $q\leq 9$ was obtained using GAP \cite{gap} and the package FinInG \cite{fining} for computations in Finite Incidence Geometry.

Let $K\cong \PGL(4,q)$ denote the subgroup of $\PGL(10,q)$ obtained as the image of the lifting homomorphism of the action of $\PGL(4,q)$ from $\PG(3,q)$ to $\PG(9,q)$ through the quadratic Veronese map as described above.
As explained in the previous sections, in order to classify the 4-dimensional symplectic semifields up to isotopism, it suffices to classify their Knuth orbits, and this amounts to classifying the $K$-orbits of solids in $\PG(9,q)$,  which are disjoint from the secant variety $\cV_{n,2}(q)^{(3)}$. Such solids are called {\it symplectic semifield solids}.

Since the problem is equivalent to classifying the $K$-orbits of symplectic semifield solids in $\PG(9,q)$, i.e. solids which are disjoint from the secant variety $\cV_{n,2}(q)^{(3)}$, and there are about $10^{22}/2$ solids in $\PG(9,8)$ and close to $10^{23}$ solids in $\PG(9,9)$ a brute force approach is not feasible. 
The main structure of the algorithm is a breadth first type algorithm with isomorphism rejection under $K$ at every step, based on ideas from the algorithm {\it snakes and ladders} as described in \cite[Section 9.6]{BeBrFrKeKoWa2006}. 
We also used ideas from the orbit computation functionality provided by the FinInG package \cite{fining} and the Orb package \cite{orb}, whose methods we adjusted and reimplemented, tailored for the specific problem at hand, in order to make the computations feasible.

In order to compute the $K$-orbits on symplectic semifield solids our algorithm computes the $K$-orbits of all symplectic semifield subspaces of projective dimension at most three. Moreover, the algorithm collects interesting date along the way, allowing us to recover the stabiliser groups and maximality of the symplectic semifield subspaces.

We first computed the orbits of $K$ on points of $\PG(9,q)$, and for each point in each orbit an element of $K$ mapping that point to the representative of its orbit. There are two orbits of points of rank four in $\PG(9,q)$ with representatives $p_1(1,0,0,0, 1,0,0, 1,0, 1)$ and $p_2(1,0,0,0, 1,0,0, 1,0, \omega)$, where $\omega$ is a primitive element in $\bF_q$. 

For each of the representatives $p_i$ and $i=1,2$ of the $K$-orbits on points of $\PG(9,q)$, we compute its stabiliser $K_{p_i}$ in $K$ and the orbits of that stabiliser on the symplectic semifield lines through $p_i$. 

For each dimension $d\leq 2$, the algorithm stores a list of orbit representatives of $d$-dimensional symplectic semifield subspaces of $\PG(9,q)$, together with a Schreier vector (for constructive recognition) and the stabliser of the representative. At the next step, for each representative $W$, the algorithm computes the orbits on the set of all possible extensions (to $(d+1)$-dimensional symplectic semifield subspaces) of $W$ under the stabiliser of $W$. This gives a list of all $(d+1)$-dimensional symplectic semifield subspaces containing a representative of the $d$-dimensional symplectic semifield subspaces. The list of $(d+1)$-dimensional symplectic semifield subspaces is then reduced (using the stored Schreier vectors) by removing the representatives from the list which belong to the same $K$-orbit as a representative which comes earlier in the list.

The number of symplectic semifield subspaces and of maximal symplectic subspaces in $\PG(9,q)$, for $q\leq 9$, up to $K$-equivalence, are listed in the following two tables below.

\begin{table}[htp]
\caption{Symplectic $d$-dimensional semifield subspaces in $\PG(9,q)$}
\begin{center}
\begin{tabular}{c|c|c|c|c|c|c|c|}
$q$ 		& 2& 3& 4 & 5 & 7 & 8 & 9\\
\hline
 $d=0$	& 2& 2& 2 & 2 & 2 & 2 & 2\\
\hline
 $d=1$	& 5& 7& 9 & 13 & 16 & 17 & 22\\
\hline
 $d=2$	& 6& 18& 31 & 59 & 106 &100 & 149\\
 \hline
 $d=3$	& 1& 2& 1 & 2 & 2 & 1 & 2 \\
\end{tabular}
\end{center}
\label{tab:sem_subs}
\end{table}%
For completeness, we include the values for $d=0$ in Table \ref{tab:sem_subs}, but note that this follows from the well-known classification of non-singular quadrics in $\PG(3,q)$.
The values in Table \ref{tab:sem_subs} in the last row for $q=2,3,4,5,7$ confirm previous results mentioned in the introduction. The other values in Table \ref{tab:sem_subs} are new. The values in the last row give the number of $K$-orbits of symplectic solids and lead to the following classification results.

\begin{theorem}
There exist no non-associative 4-dimensional symplectic semifields over $\bF_q$, $q$ even, $q\leq 8$.
\end{theorem}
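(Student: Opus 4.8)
The plan is to reduce the statement to a finite orbit enumeration and then read off the conclusion from the data recorded in Table~\ref{tab:sem_subs}. By the correspondence set up in Section~\ref{sec:defs}, isotopism classes of $4$-dimensional symplectic semifields over $\bF_q$ (in this range isotopy coincides with strong isotopy) are in bijection with the $K$-orbits of \emph{symplectic semifield solids} in $\PG(9,q)$, where $K\cong\PGL(4,q)$ is the lift of $\PGL(4,q)$ through the quadratic Veronese embedding of $\PG(3,q)$; a solid is a symplectic semifield solid precisely when the associated $4$-dimensional space of symmetric $4\times 4$ matrices over $\bF_q$ is a semifield spread set, i.e.\ all its nonzero elements are invertible, equivalently the solid is disjoint from $\cV_{3,2}(q)^{(3)}$. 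Hence it suffices to prove that for each $q\in\{2,4,8\}$ there is a \emph{single} such $K$-orbit: the field $\bF_{q^4}$, viewed as an associative semifield, already furnishes one symplectic semifield solid, so the unique orbit must be the desarguesian one, and therefore every $4$-dimensional symplectic semifield over $\bF_q$ in this range is associative.

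To establish the count I would run the breadth-first, orbit-by-orbit construction outlined in this section. First compute the $K$-orbits on points of $\PG(9,q)$, together with a Schreier vector and, for each point, an element of $K$ carrying it to its orbit representative; only the two rank-$4$ orbits, with representatives $p_1$ and $p_2$, matter, since every point of rank $\le 3$ lies on $\cV_{3,2}(q)^{(3)}$ and cannot be contained in a symplectic semifield solid. For each $p_i$, compute its stabiliser $K_{p_i}$ and the orbits of $K_{p_i}$ on the symplectic semifield lines through $p_i$; collating these over $i=1,2$ gives the $K$-orbits of symplectic semifield lines. Then iterate: given orbit representatives of the $d$-dimensional symplectic semifield subspaces with their stabilisers and Schreier data, for each representative $W$ enumerate all extensions of $W$ to a $(d+1)$-dimensional symplectic semifield subspace, compute the orbits of $\mathrm{Stab}_K(W)$ on these extensions, pool the resulting $(d+1)$-spaces over all $W$, and use the stored Schreier vectors to discard any representative $K$-equivalent to an earlier one. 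Running this up to $d=3$ for $q=2,4,8$ yields exactly one $K$-orbit of symplectic semifield solids in each case (the bottom row of Table~\ref{tab:sem_subs}), and the theorem follows.

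The principal obstacle is computational feasibility, not conceptual difficulty: a naive search is hopeless because $\PG(9,8)$ already has on the order of $10^{22}$ solids, so the whole argument rests on confining the breadth-first search to orbit representatives and on having fast constructive recognition inside $K$ (via the Schreier vectors) to perform isomorphism rejection at each of the four levels, which is where the adapted FinInG \cite{fining} and Orb \cite{orb} machinery is needed. Correctness then comes down to two verifiable points: (i) the extension-and-rejection step is exhaustive, i.e.\ every $(d+1)$-dimensional symplectic semifield subspace contains a $K$-image of one of the stored $d$-dimensional representatives — immediate, since any hyperplane of it inside that subspace is again a symplectic semifield subspace; and (ii) the equivalence test correctly decides $K$-equivalence of two subspaces, which is the implementation-level heart of the proof.
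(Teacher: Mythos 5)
Your argument is essentially identical to the paper's proof: both reduce the statement to the fact (recorded in the last row of Table~\ref{tab:sem_subs}) that for $q=2,4,8$ there is a unique $K$-orbit of symplectic semifield solids in $\PG(9,q)$, which must therefore be the orbit of the solid arising from the field $\bF_{q^4}$, forcing associativity. Your additional remarks on the breadth-first orbit computation and isomorphism rejection simply restate the algorithmic setup the paper gives before the theorem, so there is nothing to add.
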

\begin{proof}
The values in the last row of Table \ref{tab:sem_subs} for $q=2,4,8$ indicate that $K$ acts transitively on the set of semifield solids in $\PG(9,q)$. Let $q\in \{2,4,8\}$. Since the finite field $\bF_{q^4}$, when considered as a 4-dimensional algebra ${\mathbb{A}}$ over $\bF_q$, gives rise to a semifield solid $U$ in $\PG(9,q)$, the unique $K$-orbit of semifield solids equals the orbit of $U$ under $K$. This implies that each 4-dimensional symplectic semifield over $\bF_q$ is isotopic to $\mathbb A$, and is therefore associative.
\end{proof}

\begin{theorem}
A non-associative 4-dimensional symplectic semifield over $\bF_q$, $q$ odd, $q\leq 9$ is Knuth-equivalent to a Dickson commutative semifield.
\end{theorem}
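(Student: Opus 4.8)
The plan is to mirror exactly the structure of the proof of Theorem~1, but now accounting for the fact that the last row of Table~\ref{tab:sem_subs} records \emph{two} $K$-orbits of symplectic semifield solids for each odd $q \in \{3,5,7,9\}$ rather than one. First I would identify the two orbit representatives explicitly: one of them must be the solid $U_{\mathrm{ass}}$ arising from $\bF_{q^4}$ viewed as a $4$-dimensional $\bF_q$-algebra (exactly as in Theorem~1, since $\bF_{q^4}$ is commutative, hence symplectic), and by the classification of non-singular quadrics in $\PG(3,q)$ this is forced to be one of the two. The second representative $U_{\mathrm{Dickson}}$ I would check is the semifield solid corresponding to the Dickson commutative semifield of order $q^4$ over $\bF_q$; since Dickson semifields are commutative they are symplectic, and they are non-associative, so they genuinely give a distinct orbit. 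The table entry ``$d=3$ equals $2$'' then says these are \emph{all} the $K$-orbits.

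Next I would invoke the dictionary set up in Section~\ref{sec:defs}: $K$-orbits of symplectic semifield solids in $\PG(9,q)$ are in bijection with strong-isotopism classes of symplectic $4$-dimensional semifields over $\bF_q$, and — as noted in the excerpt — in the range under consideration strong isotopy and isotopy coincide, so $K$-orbits correspond to isotopism classes. Therefore every symplectic non-associative $4$-dimensional semifield over $\bF_q$ (for odd $q \le 9$) lies in the isotopism class of $U_{\mathrm{Dickson}}$, i.e.\ is isotopic to a Dickson commutative semifield. Finally, since Knuth-equivalence is coarser than isotopism (each isotopism class sits inside a Knuth orbit of at most six classes), being isotopic to a Dickson semifield implies being Knuth-equivalent to one, which is the assertion of the theorem.

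The main obstacle I anticipate is not the orbit count — that is delivered by the algorithm of Section~\ref{sec:sss} — but rather the \emph{identification} step: one must be certain that the non-associative orbit representative produced by the computation really is (strongly isotopic to) a Dickson semifield, and not some a priori new symplectic semifield that merely happens to share the same $K$-orbit size. Concretely this means exhibiting an explicit element of $\PGL(4,q)$, lifted through the Veronese map, carrying the computed representative $U$ to the standard solid built from a Dickson spread set $\{\, \mathrm{diag}\text{-type symmetric matrix presentation of } x \mapsto (x_0^2 + \omega x_1^{2\sigma},\, \ldots)\,\}$; equivalently, one computes an isotopism-invariant (for instance the nuclei, the autotopism group order, or the associated Dembowski–Ostrom polynomial up to equivalence) of $U$ and matches it against the known invariants of the Dickson family. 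For $q=9$ one must also confirm that the parameter $\sigma$ of the relevant Dickson semifield is the admissible one (a nontrivial field automorphism of $\bF_{q^2}$ fixing $\bF_q$ with the usual coprimality condition), so that the construction is genuinely non-associative. Once this matching is in place, the theorem follows immediately from the table.
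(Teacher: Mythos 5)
Your overall strategy coincides with the paper's: exhibit a non-associative symplectic semifield solid coming from the Dickson family, note it is $K$-inequivalent to the solid coming from $\bF_{q^4}$, and then let the orbit count ``$2$'' in the last row of Table~\ref{tab:sem_subs} force every non-associative symplectic semifield into that second orbit. However, there is one genuine error in how you populate the second orbit. You assert that ``since Dickson semifields are commutative they are symplectic,'' but commutativity does \emph{not} imply symplecticity; the correct statement (quoted in Section~\ref{sec:defs}) is that a semifield is Knuth-equivalent to a commutative semifield if and only if it is Knuth-equivalent to a symplectic one. The Dickson semifield $\bS_D$ itself need not correspond to any symplectic semifield solid in $\PG(9,q)$; what does is its transpose-dual $[\bS_D]^{td}$, which lies in the Knuth orbit of $\bS_D$ and is the symplectic member of that orbit. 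The paper therefore works with $\bS_D^{td}$ throughout: it is symplectic, non-associative, and hence occupies the $K$-orbit distinct from the one coming from $\bF_{q^4}$; every non-associative symplectic semifield is then isotopic to $\bS_D^{td}$ and so Knuth-equivalent to $\bS_D$ --- exactly the conclusion of the theorem, but only Knuth-equivalence, not isotopy to $\bS_D$ itself as your penultimate sentence claims.

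Separately, the ``main obstacle'' you anticipate --- explicitly matching the computed orbit representative against a Dickson spread set via an element of $\PGL(4,q)$ or via invariants --- does not arise in the paper's argument and is not needed. The counting argument does all the work: one does not need to recognise which of the two computed representatives is the Dickson one; one only needs to know that $\bS_D^{td}$ exists, is symplectic, is non-associative, and therefore must account for the unique non-associative orbit. No constructive identification is required.
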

\begin{proof}
The Dickson commutative semifield, which we denote by $\bS_D$, is defined by the binary operation
$$
(x,y)\circ(u,v)=(xu+\eta(yv)^\sigma,xv+yu)
$$
on $\bF_{q^k}\times\bF_{q^k}$, where $q$ is odd, $\sigma \in {\mathrm{Gal}}(\bF_{q^k},\bF_q)$, and $\eta\in \bF_{q^k}$ is a non-square. The isotopism class $[\bS_D]^{td}$, which is the transpose dual of the isotopism class $[\bS_D]$, belongs to the Knuth orbit of $\bS_D$ and is symplectic. Thus if $k$=2, then $[\bS_D]^{td}$ corresponds to a $K$-orbit of 3-dimensional symplectic semifield subspaces in $\PG(9,q)$. Moreover, since $\bS_D^{td}$ is non-associative this $K$-orbit is distinct from the $K$-orbit of 3-dimensional semifield subspaces obtained from the finite field $\bF_{q^4}$, considered as a 4-dimensional algebra over $\bF_q$. Since the value for $q=9$ in the last row of Table \ref{tab:sem_subs} is equal to two, there is a unique such $K$-orbit in $\PG(9,9)$. Hence if $\bS$ is any non-associative 4-dimensional symplectic semifield over $\bF_9$, then the associated 3-dimensional symplectic semifield subspace belongs to this $K$-orbit, and therefore $\bS$ is isotopic to $\bS_D^{td}$. This concludes the proof.
\end{proof}

Since the algorithm keeps track of all the $K$-orbits of intermediate symplectic subspaces, we can also recover those symplectic subspaces which are not extendible to a larger symplectic subspace, i.e. the maximal ones. For instance, out of the 59 representatives for the $K$-orbits on symplectic 2-dimensional semifield subspaces in $\PG(9,5)$ all but 10 are maximal. For $q=7$ there are 90 $K$-orbits on maximal symplectic 2-dimensional semifield subspaces in $\PG(9,7)$. We summarize this in the Table \ref{tab:maximals} for the values of $q\leq 9$.

\begin{table}[htp]
\caption{The number of isotopism classes of maximal symplectic semifield subspaces in $\PG(9,q)$}
\begin{center}
\begin{tabular}{c|c|c|c|c|c|c|c|}
$q$ 		&2 & 3 &4 & 5 & 7&8& 9\\
\hline
 $d=0$	&0 & 0 &0 & 0 & 0 & 0 & 0\\
\hline
 $d=1$	&1 & 0 &1 & 0 & 0 & 1 & 0\\
\hline
 $d=2$	&5 & 13 &30 & 49 & 90 & 99 & 124\\
 \hline
 $d=3$	&1 & 2 &1 & 2 & 2 & 1 & 2 \\
\end{tabular}
\end{center}
\label{tab:maximals}
\end{table}%

\section{Symplectic semifield lines}\label{sec:sslines}

As before, let $K$ denote the lift of $\PGL(4,q)$ through the Veronese map, i.e. 
$K\cong \PGL(4,q)$, $K\leq \PGL(10,q)$, and consider its action on subspaces of $\PG(9,q)$.
In this section we list the representatives for the $K$-orbits of $d$-dimensional symplectic semifield subspaces in $\PG(9,q)$, for $d\in \{1,2\}$, and $q\in\{8,9\}$. For simplicity and clarity these subspaces are represented by symmetric $3\times 3$-matrices where the entries below the diagonal are omitted, and where the symbols $x,y,z$ are used as parameters.

\subsection{Symplectic semifield lines in $\PG(9,8)$}

There are 17 one-dimensional symplectic semifield subspaces of $\PG(9,8)$ up to $K$-equivalence, where $K\cong \PGL(4,8)$, $K\leq \PGL(10,8)$. We list a representative for each of the 17 $K$-orbits. The parameters $(x,y)$ run over the elements of $\PG(1,8)$ and $\alpha$ is a primitive element of $\bF_8$ with minimal polynomial $X^3+X+1 \in \bF_2[X]$.



{\begin{tiny}
\begin{center}
\begin{longtable}{lll}
1. $\semsymmat{x}{y}{0}{0}{x}{0}{y}{x}{y}{x}$
&
2. $\semsymmat{x}{y}{0}{0}{x}{0}{y}{x}{y}{x + y}$
&
3. $\semsymmat{x}{y}{0}{0}{x}{0}{y}{x}{y}{x + \alpha y}$
\\
\\
4. $\semsymmat{x}{y}{0}{0}{x}{0}{y}{x}{y}{x + \alpha^2 y}$
&
5. $\semsymmat{x}{y}{0}{0}{x}{0}{y}{x}{y}{x + \alpha^4 y}$
&
6. $\semsymmat{x}{y}{0}{0}{x}{0}{y}{x}{\alpha y}{x + y}$
\\
\\
7. $\semsymmat{x}{y}{0}{0}{x}{0}{y}{x}{\alpha y}{x + \alpha^4 y}$
&
8. $\semsymmat{x}{y}{0}{0}{x}{0}{y}{x}{\alpha^2 y}{x + y}$
&
9. $\semsymmat{x}{y}{0}{0}{x + y}{0}{0}{x + y}{y}{x}$
\\
\\
10. $\semsymmat{x}{y}{0}{0}{x + y}{0}{\alpha y}{x + y}{\alpha^3 y}{x}$
&
11. $\semsymmat{x}{y}{0}{0}{x + y}{0}{\alpha y}{x + \alpha y}{\alpha y}{x +
\alpha^3 y}$
&
12. $\semsymmat{x}{y}{0}{0}{x + y}{0}{\alpha^2 y}{x + y}{\alpha^6 y}{x}$
\\
\\
13. $\semsymmat{x}{y}{0}{0}{x + y}{0}{\alpha^2 y}{x + \alpha^4 y}{\alpha^6 y}{x +
\alpha^5 y}$
&
14. $\semsymmat{x}{y}{0}{0}{x + y}{0}{\alpha^3 y}{x + \alpha^6 y}{\alpha y}{x +
\alpha^2 y}$
&
15. $\semsymmat{x}{y}{0}{0}{x + y}{0}{\alpha^4 y}{x + \alpha y}{\alpha^5 y}{x +
\alpha^3 y}$
\\
\\
16. $\semsymmat{0}{x}{y}{0}{0}{0}{y}{0}{x + y}{0}$
&
17. $\semsymmat{x}{y}{0}{0}{x + \alpha^3 y}{0}{y}{x + \alpha^6 y}{y}{x + \alpha^4 y}$
& 
\end{longtable}
\end{center}

\end{tiny}}

\subsection{Symplectic semifield lines in $\PG(9,9)$}
There are 22 $K$-orbits of symplectic semifield subspaces of dimension one in $\PG(9,9)$, where $K\cong \PGL(4,9)$, $K\leq \PGL(10,9)$. For each of the $K$-orbits we list a representative. The parameters $(x,y)$ run over the elements of $\PG(1,9)$ and $\alpha$ is a primitive element of $\bF_9$ with minimal polynomial $X^2-X-1 \in \bF_3[X]$.

\newpage

\bigskip

{\begin{tiny}
\begin{center}
\begin{longtable}{lll}
1. $\semsymmat{x}{y}{0}{0}{x}{0}{y}{x}{y}{x + y}$ & 
2. $\semsymmat{x}{y}{0}{0}{x}{0}{y}{x}{y}{x + \alpha y} $ &
3. $\semsymmat{x}{y}{0}{0}{x}{0}{y}{x}{y}{x + \alpha^3 y}$
\\
\\
4. $\semsymmat{x}{y}{0}{0}{x}{0}{y}{x}{\alpha y}{x}$ 
&
5. $ \semsymmat{x}{y}{0}{0}{x}{0}{y}{x}{\alpha y}{x + \alpha y}
 $ & 
6. $ \semsymmat{x}{y}{0}{0}{x}{0}{y}{x}{\alpha y}{x + \alpha^3 y}
$ 
\\
\\
7. $
\semsymmat{x}{y}{0}{0}{x}{0}{y}{x}{\alpha^2 y}{x}
 $ &
 8. $ 
\semsymmat{x}{y}{0}{0}{x}{0}{y}{x + \alpha y}{y}{x + \alpha y}
 $
 &
9. $ 
\semsymmat{x}{y}{0}{0}{x}{0}{y}{x + \alpha y}{y}{x + \alpha^3 y}
$
\\
\\
10. $
\semsymmat{x}{y}{0}{0}{x}{0}{y}{x + \alpha y}{y}{x + \alpha^6 y}
 $ & 
 11. $ 
\semsymmat{x}{y}{0}{0}{x}{0}{y}{x + \alpha y}{\alpha^2 y}{x + \alpha^2 y}
 $ &
 12. $ 
\semsymmat{x}{y}{0}{0}{x}{0}{y}{x + \alpha y}{\alpha^2 y}{x}
$ 
\\ 
\\
13. $
\semsymmat{x}{y}{0}{0}{x}{0}{y}{x + \alpha y}{\alpha^2 y}{x + \alpha^6 y}
 $ & 
 14. $ 
\semsymmat{x}{y}{0}{0}{x}{0}{y}{x + \alpha y}{\alpha^3 y}{x + \alpha^6 y}
 $ & 
 15. $ 
\semsymmat{x}{y}{0}{0}{x}{0}{y}{x + \alpha^2 y}{y}{x + \alpha^3 y}
$ 
\\
\\
16. $
\semsymmat{x}{y}{0}{0}{x}{0}{y}{x + \alpha^2 y}{y}{x}
 $ 
 &
17. $ 
\semsymmat{x}{y}{0}{0}{x}{0}{y}{x + \alpha^2 y}{y}{x + y}
 $ & 
 18. $ 
\semsymmat{x}{y}{0}{0}{x}{0}{\alpha y}{x}{y}{x}
$
\\
\\
19. $
\semsymmat{x}{y}{0}{0}{x}{0}{\alpha y}{x + \alpha^3 y}{\alpha^2 y}{x +
\alpha^2 y}
 $ & 
 20. $ 
\semsymmat{x}{y}{0}{0}{x}{y}{\alpha y}{x + \alpha^2 y}{\alpha^6 y}{x +
\alpha^5 y}
 $ 
 &
21. $ 
\semsymmat{x}{y}{0}{0}{x + \alpha y}{0}{0}{x + \alpha y}{y}{x}
$
\\
\\
22. $
\semsymmat{x}{y}{0}{0}{x + \alpha y}{0}{0}{x + \alpha^2 y}{\alpha^3 y}{\alpha x
+ \alpha^5 y}$
  &  &
\end{longtable}
\end{center}

\end{tiny}

\section{Symplectic semifield subspace of dimension two}\label{sec:ssplanes}

\subsection{Symplectic two-dimensional semifield subspaces of $\PG(9,8)$}

The following is a list of representatives of the $K$-orbits of two-dimensional symplectic semifield subspaces of $\PG(9,8)$, where $K\cong \PGL(4,8)$, $K\leq \PGL(10,8)$.
The parameters $(x,y,z)$ run over the elements of $\PG(2,8)$ and $\alpha$ is a primitive element of $\bF_8$ with minimal polynomial $X^3+X+1 \in \bF_2[X]$.


\begin{tiny}
\begin{center}
\begin{longtable}{lll}
1. $\semsymmat{x}{y}{z}{\alpha z}{x + \alpha^2 z}{\alpha z}{y + \alpha^2 z}{x +
\alpha^2 z}{y}{x}$
&
2. $\semsymmat{x}{y}{z}{\alpha^4 z}{x + \alpha^2 z}{\alpha^4 z}{y + \alpha^2 z}{x +
\alpha^2 z}{y}{x}$
&
3. $\semsymmat{x}{y}{z}{0}{x + \alpha^4 z}{\alpha^4 z}{y + \alpha^2 z}{x}{y}{x +
\alpha^2 z}$

\\ \\
4. $
\semsymmat{x}{y}{z}{\alpha^2 z}{x + \alpha^4 z}{\alpha^4 z}{y}{x}{y +
\alpha^2 z}{x + \alpha^2 z}
$ & 5. $
\semsymmat{x}{y}{z}{\alpha z}{x + \alpha z}{\alpha^4 z}{y + \alpha^2 z}{x +
\alpha^2 z}{y + \alpha^2 z}{x}
$ & 6. $
\semsymmat{x}{y}{z}{\alpha^4 z}{x + \alpha z}{\alpha^4 z}{y}{x + \alpha^2 z}{y +
\alpha^2 z}{x + \alpha^2 z}
$
\\ \\

7. $
\semsymmat{x}{y}{z}{0}{x + \alpha z}{\alpha^2 z}{y + \alpha z}{x + \alpha^2 z}{y
+ \alpha^2 z}{x + \alpha^2 z}
$ & 8. $
\semsymmat{x}{y}{z}{\alpha^4 z}{x + \alpha^4 z}{0}{y + \alpha^4 z}{x +
\alpha^2 z}{y}{x}
$ & 9. $
\semsymmat{x}{y}{z}{\alpha^4 z}{x + \alpha^4 z}{\alpha^4 z}{y + \alpha z}{x +
\alpha^2 z}{y}{x}
$
\\ \\
10. $
\semsymmat{x}{y}{z}{\alpha z}{x + \alpha z}{\alpha z}{y + \alpha z}{x +
\alpha^2 z}{y + \alpha^2 z}{x}
$ & 11. $
\semsymmat{x}{y}{z}{\alpha^2 z}{x + \alpha^2 z}{0}{y + \alpha^2 z}{x +
\alpha z}{y}{x}
$ & 12. $
\semsymmat{x}{y}{z}{\alpha z}{x + \alpha z}{\alpha z}{y + \alpha^2 z}{x +
\alpha^4 z}{y}{x}
$
\\ \\
13. $
\semsymmat{x}{y}{z}{\alpha^2 z}{x + \alpha^2 z}{\alpha^2 z}{y + \alpha^4 z}{x +
\alpha z}{y}{x}
$ & 14. $
\semsymmat{x}{y}{z}{\alpha^2 z}{x + \alpha z}{\alpha^2 z}{y + \alpha z}{x +
\alpha z}{y}{x}
$ & 15. $
\semsymmat{x}{y}{z}{\alpha^2 z}{x + \alpha z}{\alpha^2 z}{y + \alpha^4 z}{x +
\alpha^4 z}{y + \alpha^2 z}{x}
$
\\ \\
16. $
\semsymmat{x}{y}{z}{\alpha^4 z}{x + \alpha z}{\alpha^4 z}{y + \alpha z}{x +
\alpha z}{y}{x}
$ & 17. $
\semsymmat{x}{y}{z}{\alpha^4 z}{x + \alpha^4 z}{0}{y + \alpha^2 z}{x +
\alpha^2 z}{y + \alpha z}{x}
$ & 18. $
\semsymmat{x}{y}{z}{\alpha^2 z}{x + \alpha^2 z}{\alpha^2 z}{y + \alpha^2 z}{x +
\alpha^4 z}{y + \alpha^4 z}{x}
$
\\ \\
19. $
\semsymmat{x}{y}{z}{\alpha z}{x}{0}{y + \alpha^2 z}{x + \alpha z}{y +
\alpha z}{x + \alpha^2 z}
$ & 20. $
\semsymmat{x}{y}{z}{\alpha^2 z}{x}{\alpha^2 z}{y + \alpha z}{x + \alpha z}{y +
\alpha z}{x + \alpha^2 z}
$ & 21. $
\semsymmat{x}{y}{z}{0}{x + \alpha^2 z}{\alpha^4 z}{y + \alpha^2 z}{x +
\alpha^4 z}{y + \alpha^4 z}{x + \alpha^4 z}
$
\\ \\
22. $
\semsymmat{x}{y}{z}{\alpha^6 z}{x}{\alpha^2 z}{y + \alpha^2 z}{x + \alpha^4 z}{y
+ \alpha^4 z}{x + \alpha^2 z}
$ & 23. $
\semsymmat{x}{y}{z}{\alpha^3 z}{x}{\alpha z}{y + \alpha z}{x + \alpha^2 z}{y +
\alpha^2 z}{x + \alpha z}
$ & 24. $
\semsymmat{x}{y}{z}{\alpha^5 z}{x}{\alpha^4 z}{y + \alpha^4 z}{x + \alpha z}{y +
\alpha z}{x + \alpha^4 z}
$
\\ \\
25. $
\semsymmat{x}{y}{z}{z}{x + \alpha^4 z}{z}{y + \alpha^4 z}{x + \alpha z}{y}{x +
\alpha^2 z}
$ & 26. $
\semsymmat{x}{y}{z}{\alpha z}{x + \alpha^3 z}{\alpha^3 z}{y + \alpha^2 z}{x +
\alpha z}{y + \alpha^2 z}{x + \alpha^2 z}
$ & 27. $
\semsymmat{x}{y}{z}{\alpha z}{x + \alpha^5 z}{\alpha^3 z}{y + \alpha z}{x +
\alpha^2 z}{y + \alpha^4 z}{x + \alpha^2 z}
$
\\ \\
28. $
\semsymmat{x}{y}{z}{\alpha z}{x + \alpha^3 z}{\alpha^5 z}{y + \alpha^4 z}{x +
\alpha^2 z}{y + \alpha^4 z}{x + \alpha^2 z}
$ & 29. $
\semsymmat{x}{y}{z}{0}{x + \alpha^2 z}{\alpha^2 z}{y + z}{x}{y}{x}
$ & 30. $
\semsymmat{x}{y}{z}{0}{x + \alpha^4 z}{\alpha z}{y + z}{x + \alpha^2 z}{y}{x}
$
\\ \\
31. $
\semsymmat{x}{y}{z}{0}{x + \alpha z}{\alpha^4 z}{y + z}{x + \alpha^2 z}{y}{x}
$ & 32. $
\semsymmat{x}{y}{z}{0}{x + \alpha^2 z}{\alpha^4 z}{y + z}{x + \alpha z}{y}{x}
$ & 33. $
\semsymmat{x}{y}{z}{0}{x + \alpha^4 z}{0}{y + \alpha z}{x}{y + \alpha^2 z}{x +
y}
$
\\ \\
34. $
\semsymmat{x}{y}{z}{\alpha^2 z}{x + \alpha^2 z}{\alpha^4 z}{y}{x + \alpha^4 z}{y
+ \alpha^2 z}{x + y + \alpha^2 z}
$ & 35. $
\semsymmat{x}{y}{z}{0}{x + \alpha z}{0}{y + \alpha^2 z}{x}{y + \alpha^4 z}{x +
y}
$ & 36. $
\semsymmat{x}{y}{z}{0}{x + \alpha^2 z}{0}{y + \alpha^4 z}{x}{y + \alpha z}{x +
y}
$
\\ \\
37. $
\semsymmat{x}{y}{z}{\alpha z}{x + \alpha z}{\alpha^2 z}{y}{x + \alpha^2 z}{y +
\alpha z}{x + y + \alpha z}
$ & 38. $
\semsymmat{x}{y}{z}{\alpha^4 z}{x + \alpha^4 z}{\alpha z}{y}{x + \alpha z}{y +
\alpha^4 z}{x + y + \alpha^4 z}
$ & 39. $
\semsymmat{x}{y}{z}{\alpha^6 z}{x + \alpha^2 z}{\alpha z}{y + \alpha^4 z}{x +
\alpha^4 z}{y + \alpha^2 z}{x + y + \alpha^2 z}
$
\\ \\
%
%
%
%
%
%
%
%
%
%
%
%
40. $
\semsymmat{x}{y}{z}{\alpha^3 z}{x + \alpha z}{\alpha^4 z}{y + \alpha^2 z}{x +
\alpha^2 z}{y + \alpha z}{x + y + \alpha z}
$ & 41. $
\semsymmat{x}{y}{z}{\alpha^5 z}{x + \alpha^4 z}{\alpha^2 z}{y + \alpha z}{x +
\alpha z}{y + \alpha^4 z}{x + y + \alpha^4 z}
$ & 42. $
\semsymmat{x}{y}{z}{z}{x + \alpha^3 z}{\alpha z}{y + \alpha z}{x +
\alpha z}{y}{x + y + \alpha^2 z}
$
\\ \\
43. $
\semsymmat{x}{y}{z}{z}{x + \alpha^6 z}{\alpha^2 z}{y + \alpha^2 z}{x +
\alpha^2 z}{y}{x + y + \alpha^4 z}
$ & 44. $
\semsymmat{x}{y}{z}{z}{x + \alpha^5 z}{\alpha^4 z}{y + \alpha^4 z}{x +
\alpha^4 z}{y}{x + y + \alpha z}
$ & 45. $
\semsymmat{x}{y}{0}{z}{x}{z}{y + \alpha z}{x + \alpha^2 z}{y}{x + y +
\alpha^2 z}
$
\\ \\
46. $
\semsymmat{x}{y}{0}{z}{x}{z}{y + \alpha^2 z}{x + \alpha^4 z}{y}{x + y +
\alpha^4 z}
$ & 47. $
\semsymmat{x}{y}{0}{z}{x}{z}{y + \alpha^4 z}{x + \alpha z}{y}{x + y + \alpha z}
$ & 48. $
\semsymmat{x}{y}{0}{z}{x + \alpha^6 z}{z}{y + \alpha^4 z}{x}{y + \alpha^2 z}{x +
y + \alpha^2 z}
$
\\ \\
49. $
\semsymmat{x}{y}{0}{z}{x + \alpha^3 z}{z}{y + \alpha^2 z}{x}{y + \alpha z}{x + y
+ \alpha z}
$ & 50. $
\semsymmat{x}{y}{0}{z}{x + \alpha^5 z}{z}{y + \alpha z}{x}{y + \alpha^4 z}{x + y
+ \alpha^4 z}
$ & 51. $
\semsymmat{x}{y}{z}{\alpha^3 z}{x + \alpha^6 z}{\alpha^3 z}{y}{x + \alpha^4 z}{y
+ \alpha^2 z}{x + y}
$
\\ \\
52. $
\semsymmat{x}{y}{z}{\alpha^5 z}{x + \alpha^3 z}{\alpha^5 z}{y}{x + \alpha^2 z}{y
+ \alpha z}{x + y}
$ & 53. $
\semsymmat{x}{y}{z}{\alpha^6 z}{x + \alpha^5 z}{\alpha^6 z}{y}{x + \alpha z}{y +
\alpha^4 z}{x + y}
$ & 54. $
\semsymmat{x}{y}{z}{0}{x + \alpha z}{0}{y + \alpha^6 z}{x + \alpha z}{y +
\alpha z}{x + y + \alpha^2 z}
$
\\ \\
55. $
\semsymmat{x}{y}{z}{0}{x + \alpha^2 z}{0}{y + \alpha^5 z}{x + \alpha^2 z}{y +
\alpha^2 z}{x + y + \alpha^4 z}
$ & 56. $
\semsymmat{x}{y}{z}{0}{x + \alpha^4 z}{0}{y + \alpha^3 z}{x + \alpha^4 z}{y +
\alpha^4 z}{x + y + \alpha z}
$ & 57. $
\semsymmat{x}{y}{z}{\alpha^5 z}{x + \alpha^2 z}{0}{y + \alpha^5 z}{x +
\alpha^2 z}{y + \alpha z}{x + y + \alpha^2 z}
$
\\ \\
58. $
\semsymmat{x}{y}{z}{\alpha^6 z}{x + \alpha z}{0}{y + \alpha^3 z}{x + \alpha z}{y
+ \alpha^2 z}{x + y + \alpha^4 z}
$ & 59. $
\semsymmat{x}{y}{z}{\alpha^5 z}{x + \alpha^2 z}{0}{y + \alpha^6 z}{x +
\alpha^2 z}{y + \alpha^4 z}{x + y + \alpha z}
$ & 60. $
\semsymmat{x}{y}{z}{0}{x}{0}{y + \alpha^2 z}{x + z}{y}{x + y + \alpha^2 z}
$
\\ \\
61. $
\semsymmat{x}{y}{z}{\alpha^2 z}{x}{\alpha^2 z}{y + \alpha^2 z}{x + z}{y +
\alpha^2 z}{x + y + \alpha^2 z}
$ & 62. $
\semsymmat{x}{y}{z}{\alpha^2 z}{x + \alpha^4 z}{\alpha^2 z}{y + \alpha^4 z}{x +
z}{y + \alpha^2 z}{x + y}
$ & 63. $
\semsymmat{x}{y}{z}{0}{x + \alpha z}{0}{y + \alpha^4 z}{x + z}{y}{x + y +
\alpha^2 z}
$
\\ \\
64. $
\semsymmat{x}{y}{z}{\alpha z}{x + \alpha^2 z}{\alpha z}{y + \alpha^2 z}{x + z}{y
+ \alpha z}{x + y}
$ & 65. $
\semsymmat{x}{y}{z}{\alpha^4 z}{x + \alpha z}{\alpha^4 z}{y + \alpha z}{x + z}{y
+ \alpha^4 z}{x + y}
$ & 66. $
\semsymmat{x}{y}{z}{\alpha z}{x + \alpha^4 z}{\alpha z}{y + \alpha z}{x + z}{y +
\alpha z}{x + y + \alpha^2 z}
$
\\ \\
67. $
\semsymmat{x}{y}{z}{\alpha z}{x + \alpha z}{\alpha z}{y + \alpha^4 z}{x + z}{y +
\alpha z}{x + y + \alpha^2 z}
$ & 68. $
\semsymmat{x}{y}{z}{0}{x + \alpha^4 z}{0}{y + \alpha^2 z}{x + z}{y}{x + y +
\alpha z}
$ & 69. $
\semsymmat{x}{y}{z}{\alpha^2 z}{x + \alpha z}{\alpha^2 z}{y + \alpha^2 z}{x +
z}{y + \alpha^2 z}{x + y + \alpha^4 z}
$
\\ \\
70. $
\semsymmat{x}{y}{z}{0}{x + \alpha^2 z}{0}{y + \alpha z}{x + z}{y}{x + y +
\alpha^4 z}
$ & 71. $
\semsymmat{x}{y}{z}{0}{x}{0}{y + \alpha^4 z}{x + z}{y}{x + y + \alpha^4 z}
$ & 72. $
\semsymmat{x}{y}{z}{\alpha z}{x}{\alpha z}{y + \alpha z}{x + z}{y + \alpha z}{x
+ y + \alpha z}
$
\\ \\
73. $
\semsymmat{x}{y}{z}{\alpha^4 z}{x + \alpha^2 z}{\alpha^4 z}{y + \alpha^4 z}{x +
z}{y + \alpha^4 z}{x + y + \alpha z}
$ & 74. $
\semsymmat{x}{y}{z}{\alpha^4 z}{x}{\alpha^4 z}{y + \alpha^4 z}{x + z}{y +
\alpha^4 z}{x + y + \alpha^4 z}
$ & 75. $
\semsymmat{x}{y}{z}{0}{x + z}{0}{y + z}{x + z}{y}{x + y}
$
\\ \\
76. $
\semsymmat{x}{y}{z}{0}{x + z}{0}{y + \alpha^6 z}{x + z}{y}{x + y + \alpha^2 z}
$ & 77. $
\semsymmat{x}{y}{z}{\alpha^2 z}{x + z}{\alpha^2 z}{y + \alpha^6 z}{x + z}{y +
\alpha^2 z}{x + y + \alpha^2 z}
$ & 78. $
\semsymmat{x}{y}{z}{\alpha^2 z}{x + \alpha^5 z}{\alpha^2 z}{y + \alpha^3 z}{x +
z}{y + \alpha^2 z}{x + y + \alpha^2 z}
$
\\ \\
79. $
\semsymmat{x}{y}{z}{\alpha z}{x + \alpha^5 z}{\alpha z}{y + \alpha^3 z}{x + z}{y
+ \alpha z}{x + y + \alpha^2 z}
$ & 80. $
\semsymmat{x}{y}{z}{0}{x + z}{0}{y + \alpha^3 z}{x + z}{y}{x + y + \alpha z}
$ & 81. $
\semsymmat{x}{y}{z}{0}{x + z}{0}{y + \alpha^5 z}{x + z}{y}{x + y + \alpha^4 z}
$
\\ \\
82. $
\semsymmat{x}{y}{z}{\alpha^4 z}{x + \alpha^3 z}{\alpha^4 z}{y + \alpha^6 z}{x +
z}{y + \alpha^4 z}{x + y + \alpha^4 z}
$ & 83. $
\semsymmat{x}{y}{z}{\alpha z}{x + \alpha^6 z}{\alpha z}{y + \alpha^5 z}{x + z}{y
+ \alpha z}{x + y + \alpha z}
$ & 84. $
\semsymmat{x}{y}{z}{\alpha^4 z}{x + \alpha^6 z}{\alpha^4 z}{y + \alpha^5 z}{x +
z}{y + \alpha^4 z}{x + y + \alpha z}
$
\\ \\
85. $
\semsymmat{x}{y}{z}{\alpha^3 z}{x}{\alpha^6 z}{y + \alpha^4 z}{x + \alpha z}{y +
\alpha^6 z}{x + y + \alpha^2 z}
$ & 86. $
\semsymmat{x}{y}{z}{\alpha^6 z}{x}{\alpha^5 z}{y + \alpha z}{x + \alpha^2 z}{y +
\alpha^5 z}{x + y + \alpha^4 z}
$ & 87. $
\semsymmat{x}{y}{z}{\alpha^5 z}{x}{\alpha^3 z}{y + \alpha^2 z}{x + \alpha^4 z}{y
+ \alpha^3 z}{x + y + \alpha z}
$
\\ \\
88. $
\semsymmat{x}{y}{0}{z}{x + z}{z}{y + z}{x}{y + z}{x + y}
$ & 89. $
\semsymmat{x}{y}{z}{\alpha^6 z}{x + \alpha^2 z}{\alpha^6 z}{y + \alpha^5 z}{x +
\alpha^5 z}{y + z}{x + y + \alpha^2 z}
$ & 90. $
\semsymmat{x}{y}{z}{\alpha^3 z}{x + \alpha z}{\alpha^3 z}{y + \alpha^6 z}{x +
\alpha^6 z}{y + z}{x + y + \alpha z}
$
\\ \\
91. $
\semsymmat{x}{y}{z}{\alpha^5 z}{x + \alpha^4 z}{\alpha^5 z}{y + \alpha^3 z}{x +
\alpha^3 z}{y + z}{x + y + \alpha^4 z}
$ & 92. $
\semsymmat{x}{y}{z}{\alpha^5 z}{x + \alpha^5 z}{\alpha^5 z}{y + \alpha^3 z}{x +
z}{y + \alpha^5 z}{x + y + \alpha^2 z}
$ & 93. $
\semsymmat{x}{y}{z}{\alpha^6 z}{x + \alpha^6 z}{\alpha^6 z}{y + \alpha^5 z}{x +
z}{y + \alpha^6 z}{x + y + \alpha z}
$
\\ \\
94. $
\semsymmat{x}{y}{z}{\alpha^3 z}{x + \alpha^3 z}{\alpha^3 z}{y + \alpha^6 z}{x +
z}{y + \alpha^3 z}{x + y + \alpha^4 z}
$ & 95. $
\semsymmat{x}{y}{z}{0}{x + \alpha^6 z}{0}{y + \alpha^2 z}{x + z}{y}{x + y + z}
$ & 96. $
\semsymmat{x}{y}{z}{0}{x + \alpha^3 z}{0}{y + \alpha z}{x + z}{y}{x + y + z}
$
\\ \\
%
%
%
%
%
97. $
\semsymmat{x}{y}{z}{0}{x + \alpha^5 z}{0}{y + \alpha^4 z}{x + z}{y}{x + y + z}
$ & 98. $
\semsymmat{x}{y}{z}{\alpha^6 z}{x + z}{\alpha z}{y + \alpha z}{x + \alpha z}{y +
\alpha^2 z}{x + \alpha y + \alpha^4 z}
$ & 99. $
\semsymmat{x}{y}{z}{\alpha z}{x + \alpha^6 z}{\alpha^6 z}{y + \alpha^5 z}{x +
z}{y + \alpha^6 z}{x + \alpha y + \alpha^6 z}
$
\\ \\
100. $
\semsymmat{x}{y}{z}{\alpha^5 z}{x + z}{\alpha^2 z}{y + \alpha^2 z}{x +
\alpha^2 z}{y + \alpha^4 z}{x + \alpha^2 y + \alpha z}
$ &  \\
\end{longtable}
\end{center}

\end{tiny}


\subsection{Two-dimensional symplectic semifield subspaces in $\PG(9,9)$}

The following is a list of representatives of the $K$-orbits of two-dimensional symplectic semifield subspaces of $\PG(9,9)$, where $K\cong \PGL(4,9)$, $K\leq \PGL(10,9)$.
The parameters $(x,y,z)$ run over the elements of $\PG(2,9)$ and $\alpha$ is a primitive element of $\bF_9$ with minimal polynomial $X^3+X+1 \in \bF_3[X]$.

\begin{tiny}
\begin{center}
\begin{longtable}{lll}
1. $ \semsymmat{x}{y}{0}{z}{x + \alpha^5 z}{\alpha z}{y + \alpha^5 z}{x}{y +
\alpha z}{x + y + \alpha^5 z}
$ &
2. $ \semsymmat{x}{y}{0}{z}{x + \alpha^3 z}{\alpha z}{y + \alpha^5 z}{x}{y +
\alpha^5 z}{x + y + \alpha z}
$ &
3. $ \semsymmat{x}{y}{z}{z}{x + \alpha^2 z}{0}{y + \alpha z}{x}{y + \alpha z}{x + y}
$ \\ \\
4. $ \semsymmat{x}{y}{z}{\alpha^2 z}{x + \alpha^3 z}{\alpha z}{y + \alpha^5 z}{x}{y +
\alpha^5 z}{x + y + \alpha^5 z}
$ &
5. $ \semsymmat{x}{y}{z}{\alpha^7 z}{x + \alpha^2 z}{\alpha^5 z}{y}{x + \alpha z}{y +
\alpha z}{x + y + \alpha^5 z}
$ &
6. $ \semsymmat{x}{y}{z}{0}{x + \alpha^6 z}{\alpha z}{y + \alpha z}{x}{y}{x + y +
\alpha^5 z}
$ \\ \\
7. $ \semsymmat{x}{y}{0}{z}{x + \alpha^7 z}{\alpha z}{y + \alpha^5 z}{x}{y +
\alpha^5 z}{x + y}
$ &
8. $ \semsymmat{x}{y}{0}{z}{x + 2 z}{\alpha z}{y + \alpha z}{x}{y}{x + y + \alpha z}
$ &
9. $ \semsymmat{x}{y}{z}{0}{x}{\alpha^3 z}{y + \alpha^5 z}{x + \alpha z}{y}{x + y +
\alpha^5 z}
$ \\ \\
10. $ \semsymmat{x}{y}{z}{\alpha^7 z}{x + \alpha^5 z}{\alpha^3 z}{y + \alpha z}{x +
\alpha z}{y + \alpha^5 z}{x + y + \alpha^5 z}
$ &
11. $ \semsymmat{x}{y}{z}{0}{x + \alpha^2 z}{z}{y + \alpha^5 z}{x + \alpha z}{y}{x +
y}
$ &
12. $ \semsymmat{x}{y}{z}{\alpha^5 z}{x + \alpha^3 z}{z}{y + \alpha^5 z}{x +
\alpha z}{y + \alpha^5 z}{x + y}
$ \\ \\
13. $ \semsymmat{x}{y}{z}{\alpha^5 z}{x + z}{\alpha^3 z}{y + \alpha z}{x + \alpha z}{y
+ \alpha^5 z}{x + y + \alpha z}
$ &
14. $ \semsymmat{x}{y}{z}{z}{x + 2 z}{\alpha^3 z}{y}{x}{y}{x + y + \alpha z}
$ &
15. $ \semsymmat{x}{y}{z}{\alpha^2 z}{x + \alpha z}{\alpha^7 z}{y + \alpha z}{x +
\alpha^5 z}{y + \alpha^5 z}{x + y + \alpha z}
$ \\ \\
16. $ \semsymmat{x}{y}{z}{\alpha^7 z}{x + z}{2 z}{y}{x + \alpha^5 z}{y}{x + y}
$ &
17. $ \semsymmat{x}{y}{z}{2 z}{x + \alpha^3 z}{\alpha^6 z}{y + \alpha^5 z}{x +
\alpha^5 z}{y + \alpha^5 z}{x + y + \alpha z}
$ &
18. $ \semsymmat{x}{y}{0}{z}{x + \alpha^7 z}{2 z}{y + \alpha z}{x}{y + \alpha z}{x + y
+ \alpha z}
$ \\ \\
19. $ \semsymmat{x}{y}{z}{2 z}{x + \alpha^6 z}{\alpha^6 z}{y + \alpha^5 z}{x +
\alpha^5 z}{y + \alpha z}{x + y + \alpha^5 z}
$ &
20. $ \semsymmat{x}{y}{z}{\alpha z}{x + \alpha^5 z}{0}{y + \alpha^3 z}{x + \alpha z}{y
+ \alpha^5 z}{x + y + \alpha^5 z}
$ &
21. $ \semsymmat{x}{y}{z}{\alpha^7 z}{x}{\alpha^5 z}{y + \alpha^3 z}{x +
\alpha z}{y}{x + y}
$ \\ \\
22. $ \semsymmat{x}{y}{z}{\alpha z}{x + z}{\alpha z}{y + \alpha^3 z}{x}{y +
\alpha z}{x + y + \alpha^5 z}
$ &
23. $ \semsymmat{x}{y}{0}{z}{x + \alpha^7 z}{\alpha z}{y + \alpha^2 z}{x}{y +
\alpha z}{x + y + \alpha z}
$ &
24. $ \semsymmat{x}{y}{z}{2 z}{x + \alpha^6 z}{\alpha z}{y + \alpha^2 z}{x +
\alpha z}{y + \alpha z}{x + y}
$ \\ \\
25. $ \semsymmat{x}{y}{0}{z}{x + \alpha z}{z}{y + z}{x}{y + \alpha^5 z}{x + y +
\alpha z}
$ &
26. $ \semsymmat{x}{y}{0}{z}{x + \alpha^5 z}{\alpha^2 z}{y + \alpha^2 z}{x +
\alpha^5 z}{y + \alpha z}{x + y + \alpha^5 z}
$ &
27. $ \semsymmat{x}{y}{z}{\alpha^7 z}{x + \alpha z}{\alpha^2 z}{y + \alpha^2 z}{x +
\alpha^5 z}{y + \alpha z}{x + y + \alpha z}
$ \\ \\
28. $ \semsymmat{x}{y}{z}{\alpha^7 z}{x + \alpha^5 z}{z}{y + \alpha^3 z}{x}{y +
\alpha^5 z}{x + y + \alpha z}
$ &
29. $ \semsymmat{x}{y}{z}{\alpha^3 z}{x + z}{\alpha^3 z}{y + \alpha^2 z}{x}{y +
\alpha^5 z}{x + y + \alpha z}
$ &
30. $ \semsymmat{x}{y}{z}{\alpha^3 z}{x + \alpha^5 z}{\alpha^6 z}{y + \alpha^3 z}{x +
\alpha^5 z}{y + \alpha^5 z}{x + y}
$ \\ \\
31. $ \semsymmat{x}{y}{z}{\alpha^3 z}{x + \alpha z}{2 z}{y + \alpha^2 z}{x}{y +
\alpha z}{x + y + \alpha^5 z}
$ &
32. $ \semsymmat{x}{y}{z}{\alpha^3 z}{x + \alpha z}{0}{y + \alpha^7 z}{x +
\alpha z}{y}{x + y + \alpha^5 z}
$ &
33. $ \semsymmat{x}{y}{0}{z}{x + z}{\alpha z}{y + \alpha^7 z}{x + \alpha z}{y +
\alpha^5 z}{x + y + \alpha z}
$ \\ \\
34. $ \semsymmat{x}{y}{0}{z}{x + \alpha^7 z}{\alpha z}{y + 2 z}{x + \alpha z}{y +
\alpha z}{x + y + \alpha z}
$ &
35. $ \semsymmat{x}{y}{z}{\alpha^2 z}{x + \alpha^7 z}{\alpha^5 z}{y + \alpha^6 z}{x +
\alpha z}{y + \alpha^5 z}{x + y + \alpha z}
$ &
36. $ \semsymmat{x}{y}{z}{2 z}{x + \alpha^6 z}{0}{y + \alpha^7 z}{x}{y}{x + y +
\alpha z}
$ \\ \\
37. $ \semsymmat{x}{y}{z}{\alpha^5 z}{x + \alpha^7 z}{\alpha^6 z}{y + 2 z}{x +
\alpha^5 z}{y + \alpha z}{x + y + \alpha z}
$ &
38. $ \semsymmat{x}{y}{z}{\alpha^6 z}{x + \alpha^5 z}{\alpha^5 z}{y + \alpha^5 z}{x +
z}{y + \alpha^5 z}{x + y}
$ &
39. $ \semsymmat{x}{y}{z}{\alpha z}{x + \alpha^3 z}{\alpha z}{y}{x + z}{y}{x + y}
$ \\ \\
40. $ \semsymmat{x}{y}{z}{0}{x + z}{\alpha z}{y + \alpha^5 z}{x + \alpha^3 z}{y +
\alpha z}{x + y + \alpha z}
$ &
41. $ \semsymmat{x}{y}{0}{z}{x + \alpha z}{\alpha^2 z}{y + \alpha^5 z}{x +
\alpha^2 z}{y}{x + y + \alpha z}
$ &
42. $ \semsymmat{x}{y}{z}{\alpha^3 z}{x + \alpha z}{\alpha^3 z}{y}{x + z}{y}{x + y}
$ \\ \\
43. $ \semsymmat{x}{y}{z}{\alpha^5 z}{x}{\alpha^7 z}{y + \alpha z}{x +
\alpha^3 z}{y}{x + y}
$ &
44. $ \semsymmat{x}{y}{z}{z}{x + \alpha^2 z}{2 z}{y}{x + \alpha^2 z}{y + \alpha^5 z}{x
+ y + \alpha^5 z}
$ &
45. $ \semsymmat{x}{y}{z}{\alpha^7 z}{x + \alpha z}{\alpha z}{y + z}{x + z}{y +
\alpha^5 z}{x + y}
$ \\ \\
46. $ \semsymmat{x}{y}{z}{2 z}{x}{0}{y + \alpha^3 z}{x + z}{y + \alpha z}{x + y +
\alpha z}
$ &
47. $ \semsymmat{x}{y}{z}{\alpha^6 z}{x}{0}{y + \alpha^2 z}{x + z}{y + \alpha^5 z}{x +
y + \alpha z}
$ &
48. $ \semsymmat{x}{y}{z}{\alpha^6 z}{x + 2 z}{\alpha^5 z}{y + \alpha^3 z}{x + z}{y}{x
+ y}
$ \\ \\
49. $ \semsymmat{x}{y}{z}{z}{x}{\alpha^3 z}{y + \alpha^2 z}{x + \alpha^3 z}{y +
\alpha^5 z}{x + y}
$ &
50. $ \semsymmat{x}{y}{z}{\alpha^2 z}{x + \alpha^5 z}{\alpha^2 z}{y + \alpha^3 z}{x +
\alpha^2 z}{y}{x + y + \alpha z}
$ &
51. $ \semsymmat{x}{y}{z}{\alpha^2 z}{x}{\alpha^3 z}{y + \alpha^3 z}{x + \alpha^2 z}{y
+ \alpha z}{x + y + \alpha^5 z}
$ \\ \\
52. $ \semsymmat{x}{y}{z}{\alpha^7 z}{x + \alpha^7 z}{\alpha^2 z}{y + z}{x +
\alpha^3 z}{y}{x + y + \alpha z}
$ &
53. $ \semsymmat{x}{y}{z}{\alpha z}{x + \alpha^3 z}{\alpha^6 z}{y + z}{x + z}{y +
\alpha z}{x + y}
$ &
54. $ \semsymmat{x}{y}{z}{0}{x + \alpha^6 z}{z}{y + \alpha^7 z}{x + \alpha^3 z}{y}{x +
y}
$ \\ \\
55. $ \semsymmat{x}{y}{z}{\alpha^5 z}{x + \alpha^3 z}{\alpha^6 z}{y + \alpha^7 z}{x +
z}{y + \alpha^5 z}{x + y}
$ &
56. $ \semsymmat{x}{y}{0}{z}{x + z}{\alpha^7 z}{y + \alpha^6 z}{x + z}{y + \alpha z}{x
+ y + \alpha^5 z}
$ &
57. $ \semsymmat{x}{y}{z}{\alpha^3 z}{x + \alpha^2 z}{\alpha^2 z}{y + \alpha z}{x +
2 z}{y}{x + y + \alpha z}
$ \\ \\
58. $ \semsymmat{x}{y}{0}{z}{x + \alpha z}{\alpha^7 z}{y + \alpha^5 z}{x + 2 z}{y +
\alpha^5 z}{x + y}
$ &
59. $ \semsymmat{x}{y}{z}{\alpha^3 z}{x + z}{\alpha^6 z}{y}{x + 2 z}{y + \alpha z}{x +
y}
$ &
60. $ \semsymmat{x}{y}{z}{\alpha^2 z}{x}{\alpha^5 z}{y + \alpha^2 z}{x + \alpha^6 z}{y
+ \alpha^5 z}{x + y + \alpha z}
$ \\ \\
61. $ \semsymmat{x}{y}{z}{0}{x + 2 z}{0}{y + \alpha^7 z}{x + \alpha^7 z}{y +
\alpha^5 z}{x + y}
$ &
62. $ \semsymmat{x}{y}{z}{2 z}{x + \alpha^6 z}{\alpha^2 z}{y + 2 z}{x + 2 z}{y +
\alpha z}{x + y}
$ &
63. $ \semsymmat{x}{y}{z}{z}{x + \alpha^6 z}{2 z}{y + \alpha^7 z}{x + 2 z}{y}{x + y +
\alpha z}
$ \\ \\
64. $ \semsymmat{x}{y}{z}{\alpha^2 z}{x + \alpha^7 z}{\alpha z}{y + \alpha^2 z}{x +
\alpha^5 z}{y + \alpha^2 z}{x + y}
$ &
65. $ \semsymmat{x}{y}{z}{\alpha^3 z}{x + \alpha^7 z}{\alpha z}{y + \alpha^3 z}{x +
\alpha^5 z}{y + z}{x + y + \alpha z}
$ &
66. $ \semsymmat{x}{y}{z}{\alpha^3 z}{x + \alpha z}{\alpha^2 z}{y + z}{x + z}{y +
\alpha^3 z}{x + y}
$ \\ \\
67. $ \semsymmat{x}{y}{z}{z}{x + \alpha^6 z}{\alpha^2 z}{y + \alpha^2 z}{x + 2 z}{y +
z}{x + y + \alpha^5 z}
$ &
68. $ \semsymmat{x}{y}{z}{2 z}{x + \alpha^2 z}{\alpha^6 z}{y + 2 z}{x + 2 z}{y +
\alpha^3 z}{x + y}
$ &
69. $ \semsymmat{x}{y}{z}{\alpha^6 z}{x + 2 z}{z}{y + \alpha z}{x + \alpha^5 z}{y +
\alpha^6 z}{x + y}
$ \\ \\
70. $ \semsymmat{x}{y}{z}{\alpha^6 z}{x + \alpha z}{2 z}{y + \alpha z}{x}{y + 2 z}{x +
y}
$ &
71. $ \semsymmat{x}{y}{z}{0}{x}{\alpha^7 z}{y + \alpha^6 z}{x + 2 z}{y + \alpha^7 z}{x
+ y + \alpha^5 z}
$ &
72. $ \semsymmat{x}{y}{z}{0}{x + \alpha^6 z}{\alpha^3 z}{y + \alpha^6 z}{x +
\alpha^5 z}{y + \alpha^5 z}{x + y + \alpha^7 z}
$ \\ \\
73. $ \semsymmat{x}{y}{z}{2 z}{x}{\alpha^5 z}{y + \alpha^5 z}{x + \alpha^5 z}{y +
\alpha^5 z}{x + \alpha y + \alpha z}
$ &
74. $ \semsymmat{x}{y}{z}{0}{x + \alpha^2 z}{0}{y + \alpha^5 z}{x + \alpha z}{y}{x +
\alpha y}
$ &
75. $ \semsymmat{x}{y}{z}{0}{x + 2 z}{0}{y + \alpha z}{x + \alpha z}{y}{x + \alpha y +
\alpha z}
$ \\ \\
76. $ \semsymmat{x}{y}{z}{\alpha^2 z}{x + \alpha^7 z}{0}{y + \alpha^5 z}{x +
\alpha z}{y + \alpha z}{x + \alpha y}
$ &
77. $ \semsymmat{x}{y}{z}{\alpha^7 z}{x + \alpha^6 z}{\alpha z}{y}{x + \alpha^5 z}{y +
\alpha^5 z}{x + \alpha y + \alpha^5 z}
$ &
78. $ \semsymmat{x}{y}{z}{\alpha^6 z}{x + \alpha^5 z}{z}{y + \alpha z}{x}{y +
\alpha z}{x + \alpha y + \alpha z}
$ \\ \\
79. $ \semsymmat{x}{y}{z}{z}{x + \alpha^2 z}{z}{y + \alpha^5 z}{x + \alpha z}{y +
\alpha z}{x + \alpha y}
$ &
80. $ \semsymmat{x}{y}{z}{\alpha^7 z}{x + 2 z}{z}{y}{x + \alpha^5 z}{y + \alpha z}{x +
\alpha y + \alpha z}
$ &
81. $ \semsymmat{x}{y}{z}{2 z}{x + \alpha^7 z}{2 z}{y + \alpha z}{x + \alpha z}{y +
\alpha^5 z}{x + \alpha y + \alpha^5 z}
$ \\ \\
82. $ \semsymmat{x}{y}{z}{\alpha z}{x}{\alpha^5 z}{y + \alpha^2 z}{x}{y}{x + \alpha y
+ \alpha z}
$ &
83. $ \semsymmat{x}{y}{z}{0}{x}{0}{y + z}{x + \alpha z}{y}{x + \alpha y + \alpha z}
$ &
84. $ \semsymmat{x}{y}{z}{\alpha^3 z}{x + \alpha^5 z}{\alpha^2 z}{y + \alpha^3 z}{x +
\alpha^5 z}{y + \alpha^5 z}{x + \alpha y}
$ \\ \\
85. $ \semsymmat{x}{y}{z}{\alpha^5 z}{x + z}{\alpha^3 z}{y + \alpha^3 z}{x +
\alpha z}{y}{x + \alpha y + \alpha z}
$ &
86. $ \semsymmat{x}{y}{z}{\alpha^2 z}{x + \alpha^2 z}{2 z}{y + z}{x + \alpha^5 z}{y +
\alpha z}{x + \alpha y}
$ &
87. $ \semsymmat{x}{y}{z}{\alpha^2 z}{x + \alpha^3 z}{2 z}{y + \alpha^3 z}{x +
\alpha^5 z}{y + \alpha^5 z}{x + \alpha y}
$ \\ \\
88. $ \semsymmat{x}{y}{z}{\alpha^5 z}{x + \alpha^7 z}{\alpha^6 z}{y + \alpha^2 z}{x +
\alpha^5 z}{y + \alpha z}{x + \alpha y}
$ &
89. $ \semsymmat{x}{y}{z}{0}{x}{0}{y + 2 z}{x + \alpha z}{y}{x + \alpha y +
\alpha^5 z}
$ &
90. $ \semsymmat{x}{y}{z}{\alpha^6 z}{x}{\alpha^5 z}{y + \alpha^6 z}{x}{y}{x +
\alpha y}
$ \\ \\
%
%
91. $ \semsymmat{x}{y}{z}{z}{x + z}{z}{y + \alpha^6 z}{x + \alpha z}{y + \alpha z}{x +
\alpha y + \alpha z}
$ &
92. $ \semsymmat{x}{y}{z}{\alpha^5 z}{x + \alpha^7 z}{\alpha^2 z}{y + \alpha^7 z}{x +
\alpha^5 z}{y + \alpha z}{x + \alpha y}
$ &
93. $ \semsymmat{x}{y}{z}{2 z}{x + \alpha z}{2 z}{y + 2 z}{x + \alpha z}{y +
\alpha^5 z}{x + \alpha y}
$ \\ \\
94. $ \semsymmat{x}{y}{z}{\alpha z}{x + \alpha z}{\alpha^2 z}{y + z}{x + \alpha^3 z}{y
+ \alpha z}{x + \alpha y}
$ &
95. $ \semsymmat{x}{y}{z}{z}{x + \alpha^3 z}{\alpha^2 z}{y + \alpha^2 z}{x + z}{y +
\alpha^5 z}{x + \alpha y + \alpha^5 z}
$ &
96. $ \semsymmat{x}{y}{z}{2 z}{x + z}{0}{y + \alpha^5 z}{x + 2 z}{y}{x + \alpha y +
\alpha^5 z}
$ \\ \\
97. $ \semsymmat{x}{y}{z}{\alpha^5 z}{x + \alpha z}{\alpha^3 z}{y + \alpha z}{x +
\alpha^7 z}{y + \alpha^5 z}{x + \alpha y}
$ &
98. $ \semsymmat{x}{y}{0}{z}{x + \alpha z}{\alpha^6 z}{y}{x + \alpha^6 z}{y +
\alpha z}{x + \alpha y + \alpha^5 z}
$ &
99. $ \semsymmat{x}{y}{0}{z}{x + \alpha z}{\alpha^5 z}{y + \alpha^2 z}{x +
\alpha^7 z}{y}{x + \alpha y + \alpha^5 z}
$ \\ \\
100. $ \semsymmat{x}{y}{z}{2 z}{x + 2 z}{\alpha^5 z}{y + \alpha^2 z}{x + \alpha^7 z}{y
+ \alpha^5 z}{x + \alpha y + \alpha^5 z}
$ &
101. $ \semsymmat{x}{y}{z}{z}{x + \alpha^7 z}{\alpha^6 z}{y + \alpha^2 z}{x +
\alpha^7 z}{y}{x + \alpha y + \alpha z}
$ &
102. $ \semsymmat{x}{y}{z}{\alpha^6 z}{x + \alpha^6 z}{0}{y + \alpha^6 z}{x + 2 z}{y +
\alpha z}{x + \alpha y + \alpha z}
$ \\ \\
103. $ \semsymmat{x}{y}{z}{\alpha^3 z}{x + \alpha^5 z}{\alpha^5 z}{y + \alpha z}{x +
\alpha^5 z}{y + \alpha^2 z}{x + \alpha y}
$ &
104. $ \semsymmat{x}{y}{z}{\alpha^7 z}{x}{\alpha z}{y + \alpha z}{x + \alpha z}{y +
z}{x + \alpha y + \alpha^5 z}
$ &
105. $ \semsymmat{x}{y}{z}{2 z}{x + \alpha^3 z}{0}{y + \alpha z}{x + \alpha^5 z}{y +
\alpha^2 z}{x + \alpha y + \alpha^5 z}
$ \\ \\
106. $ \semsymmat{x}{y}{z}{\alpha^2 z}{x + 2 z}{2 z}{y + \alpha^5 z}{x}{y +
\alpha^2 z}{x + \alpha y + \alpha^5 z}
$ &
107. $ \semsymmat{x}{y}{z}{\alpha^7 z}{x + 2 z}{\alpha^7 z}{y + \alpha z}{x +
\alpha z}{y + z}{x + \alpha y + \alpha z}
$ &
108. $ \semsymmat{x}{y}{z}{\alpha^5 z}{x + \alpha z}{0}{y + \alpha^2 z}{x +
\alpha^5 z}{y + \alpha^3 z}{x + \alpha y + \alpha z}
$ \\ \\
108. $ \semsymmat{x}{y}{z}{\alpha z}{x + \alpha^7 z}{\alpha z}{y + \alpha^2 z}{x +
\alpha z}{y + \alpha^2 z}{x + \alpha y}
$ &
109. $ \semsymmat{x}{y}{0}{z}{x + \alpha^6 z}{z}{y + z}{x + \alpha z}{y + z}{x +
\alpha y + \alpha z}
$ &
110. $ \semsymmat{x}{y}{z}{\alpha z}{x + \alpha^2 z}{\alpha z}{y + \alpha^6 z}{x +
\alpha z}{y + \alpha^2 z}{x + \alpha y + \alpha^5 z}
$ \\ \\
111. $ \semsymmat{x}{y}{z}{\alpha^2 z}{x + \alpha^5 z}{\alpha^2 z}{y + 2 z}{x +
\alpha z}{y + \alpha^3 z}{x + \alpha y + \alpha z}
$ &
112. $ \semsymmat{x}{y}{z}{\alpha^2 z}{x + \alpha^6 z}{z}{y + 2 z}{x + \alpha^5 z}{y +
\alpha^2 z}{x + \alpha y + \alpha^5 z}
$ &
113. $ \semsymmat{x}{y}{z}{2 z}{x + \alpha^5 z}{\alpha^6 z}{y + \alpha^6 z}{x +
\alpha^5 z}{y + \alpha^2 z}{x + \alpha y + \alpha z}
$ \\ \\
114. $ \semsymmat{x}{y}{z}{\alpha^7 z}{x + \alpha^7 z}{\alpha^7 z}{y + \alpha^7 z}{x +
\alpha z}{y + z}{x + \alpha y + \alpha z}
$ &
115. $ \semsymmat{x}{y}{z}{\alpha^3 z}{x + \alpha^3 z}{\alpha^7 z}{y + z}{x +
\alpha^3 z}{y + \alpha^2 z}{x + \alpha y + \alpha z}
$ &
116. $ \semsymmat{x}{y}{z}{\alpha^2 z}{x + \alpha^5 z}{\alpha^2 z}{y + \alpha^7 z}{x +
2 z}{y + \alpha^3 z}{x + \alpha y + \alpha z}
$ \\ \\
117. $ \semsymmat{x}{y}{z}{\alpha^5 z}{x}{\alpha^5 z}{y}{x + \alpha z}{y +
\alpha^6 z}{x + \alpha y}
$ &
118. $ \semsymmat{x}{y}{0}{z}{x + z}{\alpha^2 z}{y}{x + \alpha^5 z}{y + \alpha^6 z}{x +
\alpha y + \alpha^5 z}
$ &
119. $ \semsymmat{x}{y}{z}{\alpha^6 z}{x}{\alpha^6 z}{y + \alpha^5 z}{x}{y +
\alpha^6 z}{x + \alpha y}
$ \\ \\
120. $ \semsymmat{x}{y}{z}{\alpha^5 z}{x + \alpha^6 z}{\alpha^7 z}{y}{x + \alpha z}{y +
\alpha^7 z}{x + \alpha y}
$ &
121. $ \semsymmat{x}{y}{z}{\alpha^6 z}{x + \alpha^6 z}{\alpha^6 z}{y + \alpha z}{x +
\alpha z}{y + \alpha^7 z}{x + \alpha y}
$ &
122. $ \semsymmat{x}{y}{z}{\alpha^5 z}{x + 2 z}{\alpha^5 z}{y + \alpha^2 z}{x +
\alpha z}{y + \alpha^6 z}{x + \alpha y + \alpha^5 z}
$ \\ \\
123. $ \semsymmat{x}{y}{z}{\alpha^3 z}{x + \alpha z}{\alpha^3 z}{y + z}{x + \alpha z}{y
+ 2 z}{x + \alpha y + \alpha^5 z}
$ &
124. $ \semsymmat{x}{y}{z}{\alpha^5 z}{x + \alpha z}{0}{y + \alpha^5 z}{x +
\alpha^7 z}{y + \alpha^7 z}{x + \alpha y + \alpha z}
$ &
125. $ \semsymmat{x}{y}{z}{0}{x + z}{0}{y + \alpha z}{x + \alpha^7 z}{y + \alpha^6 z}{x
+ \alpha y + \alpha^5 z}
$ \\ \\
126. $ \semsymmat{x}{y}{z}{0}{x + z}{0}{y + 2 z}{x + \alpha z}{y}{x + \alpha y +
\alpha^3 z}
$ &
127. $ \semsymmat{x}{y}{z}{0}{x}{0}{y + z}{x + \alpha^3 z}{y}{x + \alpha y +
\alpha^3 z}
$ &
128. $ \semsymmat{x}{y}{z}{\alpha^7 z}{x + \alpha^7 z}{\alpha^6 z}{y + \alpha^5 z}{x +
\alpha^7 z}{y}{x + \alpha y + \alpha^3 z}
$ \\ \\
129. $ \semsymmat{x}{y}{z}{\alpha z}{x + \alpha^5 z}{\alpha z}{y + \alpha^7 z}{x +
\alpha z}{y + \alpha^2 z}{x + \alpha y + \alpha^3 z}
$ &
130. $ \semsymmat{x}{y}{z}{\alpha^2 z}{x + z}{\alpha^2 z}{y + 2 z}{x + \alpha z}{y +
\alpha^3 z}{x + \alpha y + \alpha^3 z}
$ &
131. $ \semsymmat{x}{y}{z}{\alpha^5 z}{x + \alpha^2 z}{\alpha^7 z}{y + \alpha^6 z}{x +
\alpha^3 z}{y + \alpha^3 z}{x + \alpha y + \alpha^2 z}
$ \\ \\
132. $ \semsymmat{x}{y}{z}{\alpha^7 z}{x + \alpha^3 z}{\alpha z}{y + \alpha^2 z}{x +
\alpha^3 z}{y + \alpha^7 z}{x + \alpha y + \alpha^3 z}
$ &
133. $ \semsymmat{x}{y}{z}{z}{x + \alpha^6 z}{z}{y}{x + \alpha z}{y + \alpha z}{x +
\alpha y + \alpha^6 z}
$ &
134. $ \semsymmat{x}{y}{z}{0}{x + \alpha^7 z}{0}{y + 2 z}{x + \alpha z}{y}{x + \alpha y
+ 2 z}
$ \\ \\
%
135. $ \semsymmat{x}{y}{z}{\alpha z}{x}{0}{y + \alpha^5 z}{x + \alpha^3 z}{y +
\alpha z}{x + \alpha y + \alpha^6 z}
$ &
136. $ \semsymmat{x}{y}{z}{0}{x + \alpha^7 z}{2 z}{y + \alpha^2 z}{x + \alpha^7 z}{y +
\alpha z}{x + \alpha y + \alpha^6 z}
$ &
137. $ \semsymmat{x}{y}{z}{\alpha^7 z}{x + \alpha^6 z}{\alpha^7 z}{y + z}{x +
\alpha z}{y + z}{x + \alpha y + 2 z}
$ \\ \\
138. $ \semsymmat{x}{y}{z}{0}{x + \alpha z}{\alpha^5 z}{y + z}{x + \alpha^5 z}{y +
\alpha^5 z}{x + \alpha^3 y}
$ &
139. $ \semsymmat{x}{y}{z}{0}{x}{0}{y + z}{x + \alpha z}{y}{x + \alpha^3 y + \alpha z}
$ &
140. $ \semsymmat{x}{y}{z}{\alpha^5 z}{x + \alpha z}{\alpha^7 z}{y + z}{x +
\alpha z}{y}{x + \alpha^3 y + \alpha^5 z}
$ \\ \\
141. $ \semsymmat{x}{y}{z}{\alpha^2 z}{x + \alpha^3 z}{\alpha^2 z}{y + \alpha^3 z}{x +
\alpha^3 z}{y + \alpha^5 z}{x + \alpha^3 y + \alpha z}
$ &
142. $ \semsymmat{x}{y}{z}{0}{x + \alpha^7 z}{\alpha^2 z}{y + \alpha^7 z}{x +
\alpha^7 z}{y + \alpha z}{x + \alpha^3 y + \alpha^5 z}
$ &
143. $ \semsymmat{x}{y}{z}{\alpha^6 z}{x + 2 z}{2 z}{y + \alpha^2 z}{x + \alpha^2 z}{y
+ z}{x + \alpha^3 y + \alpha^5 z}
$ \\ \\
144. $ \semsymmat{x}{y}{z}{\alpha z}{x + \alpha^2 z}{\alpha z}{y + \alpha z}{x +
\alpha^3 z}{y + 2 z}{x + \alpha^3 y + \alpha z}
$ &
145. $ \semsymmat{x}{y}{z}{\alpha^2 z}{x + \alpha^3 z}{z}{y + \alpha^2 z}{x + z}{y +
2 z}{x + \alpha^3 y + \alpha z}
$ &
146. $ \semsymmat{x}{y}{z}{\alpha^6 z}{x + \alpha^3 z}{\alpha^6 z}{y + \alpha^2 z}{x +
\alpha^3 z}{y + \alpha z}{x + \alpha^3 y + \alpha^2 z}
$ \\ \\
147. $ \semsymmat{x}{y}{z}{0}{x + \alpha^5 z}{2 z}{y + \alpha^6 z}{x + \alpha^5 z}{y +
\alpha^3 z}{x + \alpha^3 y + \alpha^2 z}
$ &
148. $ \semsymmat{x}{y}{z}{z}{x + \alpha^2 z}{z}{y}{x + \alpha^3 z}{y + \alpha^3 z}{x +
\alpha^3 y + \alpha^2 z}
$ &
\end{longtable}
\end{center}

\end{tiny}




\end{document}